\newcommand{\f}[1]{\mathbf{#1}}
\newcommand{\RR}{\mathbb{R}}
\newcommand{\CC}{\mathbb{C}}
\begin{document}

\title*{Fast Approximate Implicitization of Envelope Curves using Chebyshev
Polynomials}
\author{Oliver J. D. Barrowclough \and Bert J\"uttler \and Tino Schulz }
\institute{%
  Oliver J. D. Barrowclough \at
  SINTEF ICT, Applied Mathematics, P.O. Box 124, Blindern, 0314 Oslo, Norway
  \email{oliver.barrowclough@sintef.no}
  \\
  Bert J\"uttler \at
  Johannes Kepler University, Institute of Applied Geometry, Altenberger Str.
69, A-4040 Linz, Austria 
  \email{bert.juettler@jku.at}
  \\
  Tino Schulz \at
  GALAAD, INRIA M\'editerran\'ee, 2004 Route des Lucioles, 06902
Sophia-Antipolis, France
  \email{tino.schulz@inria.fr}}

 \titlerunning{Fast Envelope Implicitization}
 \authorrunning{Barrowclough et al.}

\maketitle

\abstract{Consider a rational family of planar rational curves in a
  certain region of interest.  We are interested in finding an
  approximation to the implicit representation of the envelope.  Since
  exact implicitization methods tend to be very costly, we employ an
  adaptation of approximate implicitization to envelope computation.
  Moreover, by utilizing an orthogonal basis in the construction
  process, the computational times can be shortened and the numerical
  condition improved. We provide an example to illustrate the
  performance of our approach.}

\keywords{implicitization, approximation, envelopes, Chebyshev polynomials.}


\section{Introduction}
\let\thefootnote\relax\footnote{The final publication is available at Springer via:\\ \url{http://dx.doi.org/ 10.1007/978-94-007-4620-6_26}}
In geometric applications there are two basic standards for
representing curves, namely the {\em parametric} and the {\em
  implicit} descriptions. Both descriptions feature specific advantages
and disadvantages that complement each other. For instance, parametric 
curves allow the simple generation of point samples, while implicit 
forms support the decision of point location queries. In many applications, 
such as intersection computations, it is an advantage if both representations 
are available, and conversion algorithms are therefore of substantial practical 
interest. The conversion processes are called {\em parametrization} and 
{\em implicitization}, respectively.

A rational curve may always be implicitized, whereas the opposite is
not true \cite{sendra:rational}.  Several techniques for {\em exact}
implicitization exist, e.g. Gr\"obner bases, moving curves/surfaces,
or methods based on resultants, (see e.g.
\cite{hoffmann:implicit}).  However, due to their
computational complexity, their practical use is often restricted to
low-degree curves.  Moreover, the variety obtained by exact
implicitization may contain unexpected branches and self-intersections.

A valid alternative to exact methods is {\em approximate
  implicitization}; cf.
\cite{dokken:approximate,dokken:weak}.  Instead of the exact
variety, a low degree approximation is used to represent the shape of
the geometric object in a certain region of interest.  This technique
can be implemented using floating point numbers and thus it offers
faster computation, while having very high convergence rates.  As
shown in \cite{barrowclough:linalg}, the speed-up may be increased
even further by using an orthogonal basis in the construction process.

These features make approximate implicitization a promising candidate
for an efficient computation of {\em envelope curves}.  Envelopes are
used in different contexts in mathematics and applications, ranging
from gearing theory and geometrical optics to NC-machining and 
Computer-Aided Design. In robotics, envelopes are ubiquitous,
appearing for instance as singularities or boundaries. The theory of envelopes 
is covered by the classical literature, 
and is continuously extended, due to their practical importance
\cite{abdel:swept,kim:fast,peternell:swept,jue:exact}.

Approximate implicitization has recently been adapted to the
computation of envelopes in \cite{schulz:envelope}. As shown there,
the idea is feasible and most properties of the original method can be
preserved, such as the possibility of obtaining the exact
solution. However, the convergence behaviour for higher degrees has
not previously been studied and the computations are still fairly expensive,
needing integrals of products of high degree polynomials.

The present paper uses the latest results from approximate
implicitization to obtain a fast and efficient algorithm for 
approximating the envelope. This will make the use of
implicit methods more attractive and moreover allow us to study the
convergence behaviour experimentally.
The paper is organized as follows: In Section two we will recall the
basics of envelopes of planar curves.  After that, the third section
shows how approximate implicitization can be used to compute envelopes
and derives a fast and efficient algorithm.  The performance of our
approach is illustrated with an example and discussion in Section
four.

\section{Envelopes of Rational Families of  Curves}
Consider the family of rational curves
\begin{equation}\label{eq:92}
\f{p}(s,t)= \bigl( x(s,t)/w(s,t),y(s,t)/w(s,t) \bigr)^T, \quad
(s,t)\in I\times J
\end{equation}
where $x$, $y$ and $w$ are bivariate polynomials of bidegree $(n_1,n_2)$
with $\gcd(x,y,w)=1$ and $I, J\subset\RR$ are closed intervals.  We
assume that $w(s,t)\neq 0$ for all ${(s,t)\in I\times J}$. Either $s$ or
$t$ can be thought of as the time-like parameters, and the remaining
parameter $t$ or $s$ is then used to parameterize the curves forming
the family.

The {\em envelope} of the mapping $\f p$ consists of those points
where its Jacobian $\f J(s,t)$ becomes singular.  We observe that
$\det \f J(s,t) = h(s,t)/w(s,t)^3$, where
\begin{equation}
h(s,t)=\det\left( \begin{array}{ccc} x(s,t) 
& \partial_s x(s,t) & \partial_t x(s,t) \\
y(s,t) & \partial_s y(s,t) & \partial_t y(s,t) \\
w(s,t) & \partial_s w(s,t) & \partial_t w(s,t) \end{array} \right).
\end{equation}
The function $h$ is called the {\em envelope function}, since its zero
set determines those points in the parameter domain which are mapped to
the envelope. Unfortunately, certain parts of the zero set of $h$ may
degenerate under the mapping $\f p$.  

The earlier paper \cite{schulz:envelope} describes how these {\em
  improper factors} can be removed from $h$.  This can be done via
some gcd computation and gives the {\em reduced envelope function
  $\tilde h$}.  The image of the zero set of $\tilde h$ under $\f p$
is called {\em proper part} of the envelope.

Let $q:\CC^2 \rightarrow \CC$ be the polynomial which defines {\em 
the implicit equation of the proper part of the envelope} of $\f p$.
According to Theorem 1 of \cite{schulz:envelope}, there exists a real
polynomial $\lambda(s,t)$ such that
\begin{equation}\label{eq:148}
\left( q\circ \f p \right)(s,t)w(s,t)^d = \lambda (s,t) \tilde h (s,t)^2.
\end{equation}
Equation \eqref{eq:148} is linear with respect to the coefficients of
$q$ and $\lambda$. Let
\begin{equation}\label{eq:bases}
q(\mathbf{x}) = \mathbf{c}_q^T\beta(\mathbf{x}) \quad \text{and} \quad
\lambda(s,t) = \mathbf{c}_{\lambda}^T\alpha(s,t),
\end{equation}
where $\beta(\mathbf{x})=(\beta_k(\mathbf{x}))_{k=1}^M$ and 
$\alpha(s,t)=(\alpha_i(s)\alpha_j(t))_{(i,j)=(0,0)}^{(k_1,k_2)}$ are bases of
polynomials in $\f x$ and $s,t$ of total degree $d$ and bidegree $(k_1,k_2)$
respectively, where $M=\binom{d+2}{2}$. The coefficients of $q$ and $\lambda$
with respect to these bases form a vector $\mathbf{c} =
(\mathbf{c}_q^T,\mathbf{c}_\lambda^T)^T$.  We formulate the {\em
  problem of approximate envelope implicitization:} Find the
coefficients $\mathbf{c}$ which solve the weighted least squares
minimization problem
\begin{equation}\label{eq:min}
 \min_{\Vert\mathbf{c}\Vert_2=1} \int_{I\times J} \omega(s,t) \left( (q\circ
\f{p})(s,t) w(s,t)^d -\lambda(s,t) h(s,t)^2 \right)^2 \mathrm{d}(s,t),
\end{equation}
for a nonnegative weight function $\omega,$ and chosen degrees $d,$ $k_1$ and
$k_2.$ 
 
It is important to mention that we use $h$ instead of the exact
$\tilde h$, since our algorithm uses floating point computations which
do not support exact gcd computations.  While an exact solution of
this simplified problem might produce additional branches, the effect
on our low degree approximation will be negligible.

The result of the minimization \eqref{eq:min} depends both on the
choice of bases of $q$ and $\lambda$ and on the weight function
$\omega.$ The standard choice of a triangular Bernstein basis for $q$
and a tensor-product Bernstein basis for $\lambda$ has been used for
the approximations in this paper and also in
\cite{schulz:envelope}. However, as a major difference to the approach
in \cite{schulz:envelope} where $\omega\equiv1$, here we use a 
tensor product Chebyshev weight function on the domain $I\times J,$
for the reasons described in the next section.

\section{Fast Approximate Implicitization of Envelope
Curves}\label{section:fast}

The direct method for finding an approximate implicitization of envelope
curves by evaluating high degree integrals is simple, but 
computationally costly. In addition, the resulting symmetric positive
semi-definite matrix can be rather ill conditioned, leading to inaccurate 
null space computations when using floating point arithmetic. This is similar to
the case of approximate implicitization of parametric curves 
presented in \cite{barrowclough:linalg}. In that paper, an approach using
orthogonal polynomials is presented which greatly improves both the 
conditioning and the computation time of the problem. In this section we give the
details of how to implement the approach to approximate implicitization 
of envelope curves using Chebyshev polynomials.

\subsection{Approximate Implicitization using Chebyshev Polynomials}

As described previously, the method works by minimization of the integral
(\ref{eq:min}). Such a problem is aided by expressing the function 
in a basis orthonormal with respect to the chosen weight function $\omega.$ The
objective function is expressible in any tensor product polynomial 
basis of bidegree $$(L_1,L_2) =
(\max(dn_1,k_1+2\deg_s(h)),\max(dn_2,k_2+2\deg_t(h))).$$ Thus, choosing an
orthonormal basis (e.g., tensor-product Chebyshev 
polynomials), $\mathbf{T}(s,t) = (T_i(s)T_j(t))_{i=0,j=0}^{L_1,L_2}$ written in
vector form and using (\ref{eq:bases}), we can write
\begin{equation}\label{eq:chebyshev}
(q \circ \mathbf{p}) (s,t) w(s,t)^d - \lambda(s,t) h(s,t)^2 = \mathbf{T}(s,t)^T
(\mathbf{D}_q \mathbf{c}_q + \mathbf{D}_\lambda \mathbf{c}_\lambda),
\end{equation}
where the matrices $\mathbf{D}_q$ and $\mathbf{D}_\lambda$ contain coefficients
in $\mathbf{T}.$ Now, defining a matrix 
\begin{equation}\label{eq:dmatrix}
\mathbf{D} = (\mathbf{D}_q,\mathbf{D}_\lambda),
\end{equation}
we claim that the singular vector corresponding to the smallest singular value
of $\mathbf{D}$ solves the minimization problem (\ref{eq:min}). 
To see this, we prove the following Theorem:

\begin{theorem}
Let the matrix $\mathbf{D}$ be defined as in (\ref{eq:dmatrix}). Then we have 
\[
\min_{\Vert \mathbf{c} \Vert_2 = 1} \int_{I\times J} \omega
((q\circ\mathbf{p})w^d - \lambda h^2)^2 
= \min_{\Vert\mathbf{c}\Vert_2 = 1} \Vert \mathbf{D}\mathbf{c} \Vert_2^2.
\]
\end{theorem}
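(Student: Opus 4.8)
The plan is to turn the integral into a quadratic form in the coefficient vector $\mathbf{c}$ and then to collapse that quadratic form using orthonormality of the Chebyshev basis. First I would check that the left-hand integrand is genuinely a bivariate polynomial of controlled bidegree: $(q\circ\f{p})(s,t)\,w(s,t)^d$ has bidegree at most $(dn_1,dn_2)$, since composing the total-degree-$d$ polynomial $q$ with $\f{p}$ and clearing the denominator $w^d$ produces a polynomial, while $\lambda(s,t)\,h(s,t)^2$ has bidegree at most $(k_1+2\deg_s h,\,k_2+2\deg_t h)$. Hence their difference lies in the span of the tensor-product Chebyshev polynomials $\mathbf{T}(s,t)=(T_i(s)T_j(t))$ of bidegree $(L_1,L_2)$, which is exactly what makes the representation (\ref{eq:chebyshev}) legitimate: $\mathbf{D}_q$ and $\mathbf{D}_\lambda$ are the fixed coordinate matrices sending $\mathbf{c}_q$ and $\mathbf{c}_\lambda$ to the Chebyshev coefficients of $(q\circ\f{p})w^d$ and of $-\lambda h^2$ respectively, so the difference has Chebyshev coefficient vector $\mathbf{D}\mathbf{c}=\mathbf{D}_q\mathbf{c}_q+\mathbf{D}_\lambda\mathbf{c}_\lambda$.

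Next I would substitute (\ref{eq:chebyshev}) into the integral and expand the square:
\[
\int_{I\times J}\omega\,\bigl(\mathbf{T}^T\mathbf{D}\mathbf{c}\bigr)^2\,\mathrm{d}(s,t)
=(\mathbf{D}\mathbf{c})^T\Bigl(\int_{I\times J}\omega\,\mathbf{T}\,\mathbf{T}^T\,\mathrm{d}(s,t)\Bigr)(\mathbf{D}\mathbf{c}).
\]
Because both the weight $\omega$ and the basis functions are tensor products, each entry of the Gram matrix $\int_{I\times J}\omega\,\mathbf{T}\,\mathbf{T}^T\,\mathrm{d}(s,t)$ factors as $\bigl(\int_I\omega_s T_iT_{i'}\bigr)\bigl(\int_J\omega_t T_jT_{j'}\bigr)$. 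Choosing $\omega$ to be the tensor-product Chebyshev weight, suitably normalized and affinely transported from the reference interval $[-1,1]$ to $I$ and to $J$, makes each univariate factor a Kronecker delta, so the Gram matrix is the identity and the integral reduces to $(\mathbf{D}\mathbf{c})^T(\mathbf{D}\mathbf{c})=\Vert\mathbf{D}\mathbf{c}\Vert_2^2$. Taking the minimum over the unit sphere $\Vert\mathbf{c}\Vert_2=1$ on both sides then gives the claimed identity; standard linear algebra further identifies this minimum with the square of the smallest singular value of $\mathbf{D}$, attained at the associated right singular vector, which is the statement announced before the theorem.

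The expansion of the square and the bidegree bookkeeping are routine; the one step that needs real care is the orthonormality claim. One must pin down exactly which weight function $\omega$ renders $\mathbf{T}$ orthonormal, keeping track of the change of variables from $[-1,1]$ to $I$ and $J$ together with the associated Jacobian factors and the Chebyshev normalization constants (the zeroth polynomial carrying a different constant than the higher ones), and one must confirm that $(L_1,L_2)$ is chosen large enough that $\mathbf{T}$ really spans the space containing the integrand. Once the basis is genuinely orthonormal the conclusion is immediate; had a non-orthonormal basis been used instead, the Gram matrix would be some fixed symmetric positive definite $\mathbf{W}$ and the right-hand side would read $\min_{\Vert\mathbf{c}\Vert_2=1}\mathbf{c}^T\mathbf{D}^T\mathbf{W}\mathbf{D}\mathbf{c}$, which is precisely the ill-conditioned complication the Chebyshev choice is designed to eliminate.
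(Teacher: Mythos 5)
Your proposal is correct and follows essentially the same route as the paper: substitute the Chebyshev expansion (\ref{eq:chebyshev}) into the integral, factor out the Gram matrix $\int_{I\times J}\omega\,\mathbf{T}\mathbf{T}^T$, use orthonormality to reduce it to the identity, and obtain $\Vert\mathbf{D}\mathbf{c}\Vert_2^2$ before minimizing over the unit sphere. The extra bookkeeping you supply on bidegrees and on the Chebyshev normalization constants is a sensible elaboration of details the paper leaves implicit, but it does not change the argument.
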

\begin{proof}
By (\ref{eq:chebyshev}) and (\ref{eq:dmatrix}) we have
\begin{eqnarray*}
       \int_{I\times J} \omega ((q\circ\mathbf{p})w^d - \lambda h^2 )^2    
= \int_{I\times J} \omega
(\mathbf{c}^T\mathbf{D}^T\mathbf{T})(\mathbf{T}^T\mathbf{D}\mathbf{c}) & = & \\
\mathbf{c}^T\mathbf{D}^T \left(\int_{I\times J} \omega
\mathbf{T}\mathbf{T}^T\right)\mathbf{D}\mathbf{c} 
= \mathbf{c}^T\mathbf{D}^T\mathbf{D}\mathbf{c} & = &
\Vert\mathbf{D}\mathbf{c}\Vert_2^2. \quad \qed
\end{eqnarray*}
\end{proof}
Since we have $\min_{\Vert \mathbf{c}\Vert_2=1}\Vert \mathbf{D}\mathbf{c}\Vert_2
= \sigma_{\min},$ where $\sigma_{\min}$ is the smallest 
singular value of $\mathbf{D},$ the corresponding right singular vector
solves the problem. The problem is, however, better conditioned 
and can be implemented in a more efficient way than the weak approach
\cite{barrowclough:linalg}.

\subsection{Implementation of the Chebyshev method}

The choice of using Chebyshev polynomials for the orthogonal basis is made
mainly for computational reasons; the coefficients 
can be generated via a fast algorithm. This utilizes an existing method outlined
for univariate polynomials in 
\cite{trefethen:spectral}, which exploits the discrete orthogonality of
Chebyshev polynomials at Chebyshev points. 

Here we briefly describe the algorithm for efficient generation of
tensor-product Chebyshev coefficients. The univariate Chebyshev points 
of degree $L$ in the interval $[0,1]$ are given by:
\[
t_{j,L} = \left(1-\cos\left(j\pi /L \right)\right)/2, \quad j=0,\ldots,L.
\] 
The Chebyshev coefficients of any tensor product polynomial $f$ of bidegree no
higher than $(L_1,L_2)$ can then be generated by the following 
procedure \cite{trefethen:spectral}:
\begin{itemize}
 \item Construct a matrix $\mathbf{f} =
(f(t_{i,L_1},t_{j,L_2}))_{i=0,j=0}^{L_1,L_2}$ of values of the function $f$ at
the tensor-product Chebyshev points,
 \item Extend $\mathbf{f}$ to its even counterpart $\hat{\mathbf{f}}:$
 \begin{align*}
 \hat{f}_{i,j} & = f_{i,j}, \ \ i = 0,\ldots,L_1, \ j = 0,\ldots,L_2, \\
 \hat{f}_{L_1+i,j} & = f_{L_1-i,j}, \ \ i = 1,\ldots,L_1-1, \ j = 0,\ldots,L_2,
\\
 \hat{f}_{i,L_2+j} & = f_{L_1-i,j}, \ \ i = 0,\ldots,L_1, \ j = 1,\ldots,L_2-1,
\\
 \hat{f}_{L_1+i,L_2+j} & = f_{L_1-i,L_2-j}, \ \ i = 1,\ldots,L_1-1, \ j =
1,\ldots,L_2-1.
 \end{align*}
 \item perform a bivariate fast Fourier transform (FFT) to get
$\tilde{\mathbf{f}} = \text{FFT}(\hat{\mathbf{f}})$,
 \item extract the first $(L_1+1,L_2+1)$ coefficients of $\tilde{\mathbf{f}}$ to
get $\mathbf{g}=(\tilde{f}_{i,j})_{i,j=0}^{L_1,L_2}.$ The 
matrix $\mathbf{g}$ then contains the tensor product Chebyshev coefficients of
$f.$
\end{itemize}

The algorithm for approximate implicitization proceeds by applying the above
procedure to the functions
$\{w^d(\beta_k\circ\mathbf{p})\}_{k=1}^M$ and
$\{-h^2\alpha_{l}\}_{l=1}^{L_1L_2},$ and arranging the coefficients in matrices
$\mathbf{D}_q$ and $\mathbf{D}_\lambda$ according to the definition
(\ref{eq:dmatrix}). The efficiency of the method is due to it being based on
point sampling and FFT. Moreover, the sampling can be done entirely in parallel
making the method highly suitable for implementation on heterogeneous
architectures.

\section{Numerical results}

In this section we present an example of the method along with both computation
times and estimations for the convergence rates. In order to generate reliable
data, we have chosen a degree six family of lines which has a rational
envelope. We can thereby use a parametrization of the envelope to compute the
algebraic error of the approximations. The family of lines is pictured in Figure
\ref{fig:example_lines}, along with the envelope approximations up to the exact
implicitization at degree six. For these approximations we take $k_1 =
\max(0,dn_1-2\deg_s(h))$ and $k_2 = \max(0,dn_2-2\deg_t(h)),$ since this is also 
the minimum needed for the exact solution.

It can be seen that with increased degree the approximations converge quickly.
It is possible that with higher degrees, extra branches may appear in the
region of interest. For example, the approximation of degree five has an extra
branch close to the envelope curve. However, such artifacts could be avoided 
using a suitable collection of low-degree approximations (see 
\cite{schulz:envelope} for an adaptive algorithm).
\begin{figure} 
\begin{center}
 \caption{Approximations of the envelope of a family of lines for degrees
$d=1,\ldots,6.$}\label{fig:example_lines}
\begin{tabular}{ccc}
  \label{fig:app1}\includegraphics[width=0.27\textwidth]{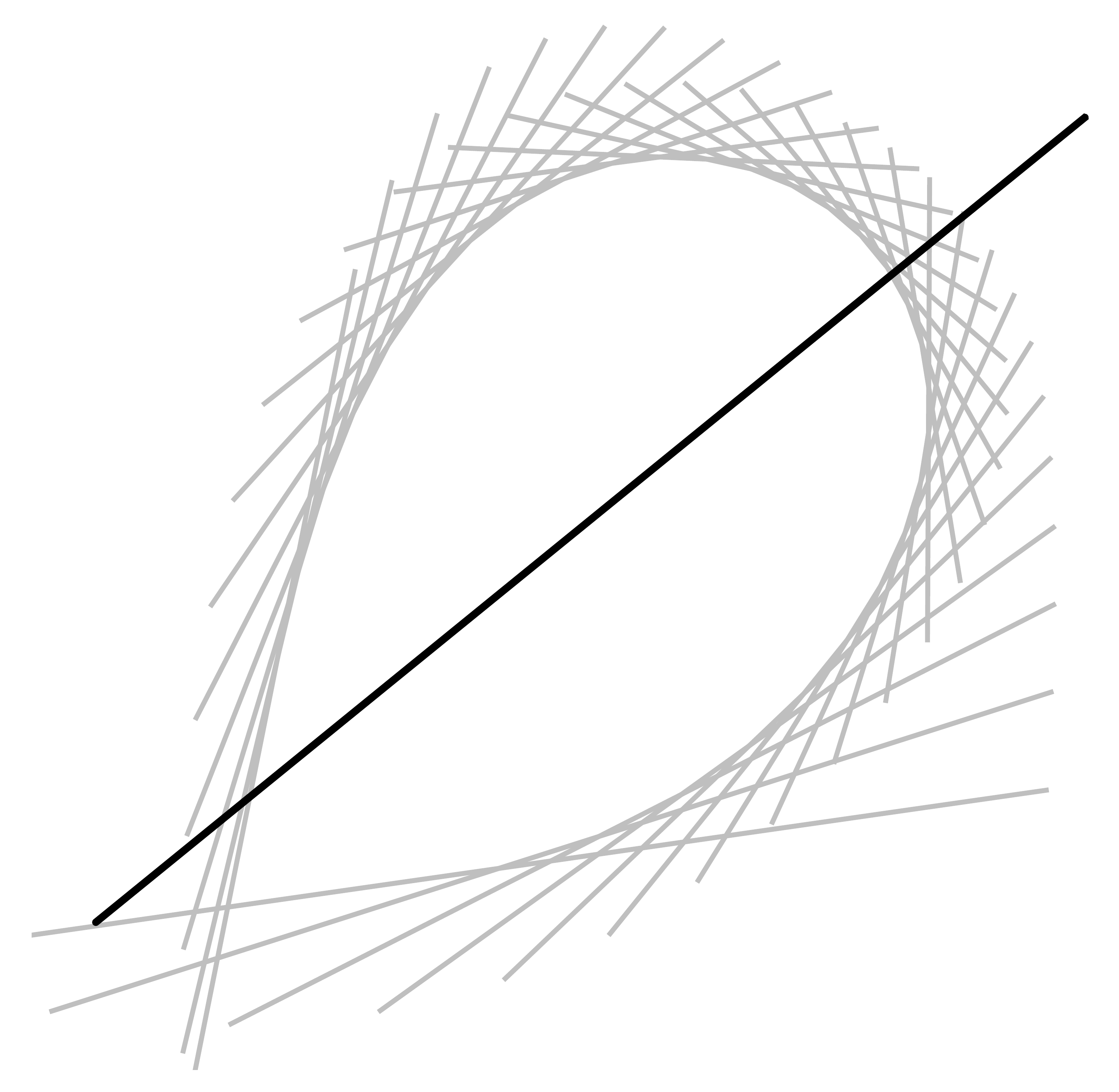}   
& \label{fig:app2}\includegraphics[width=0.27\textwidth]{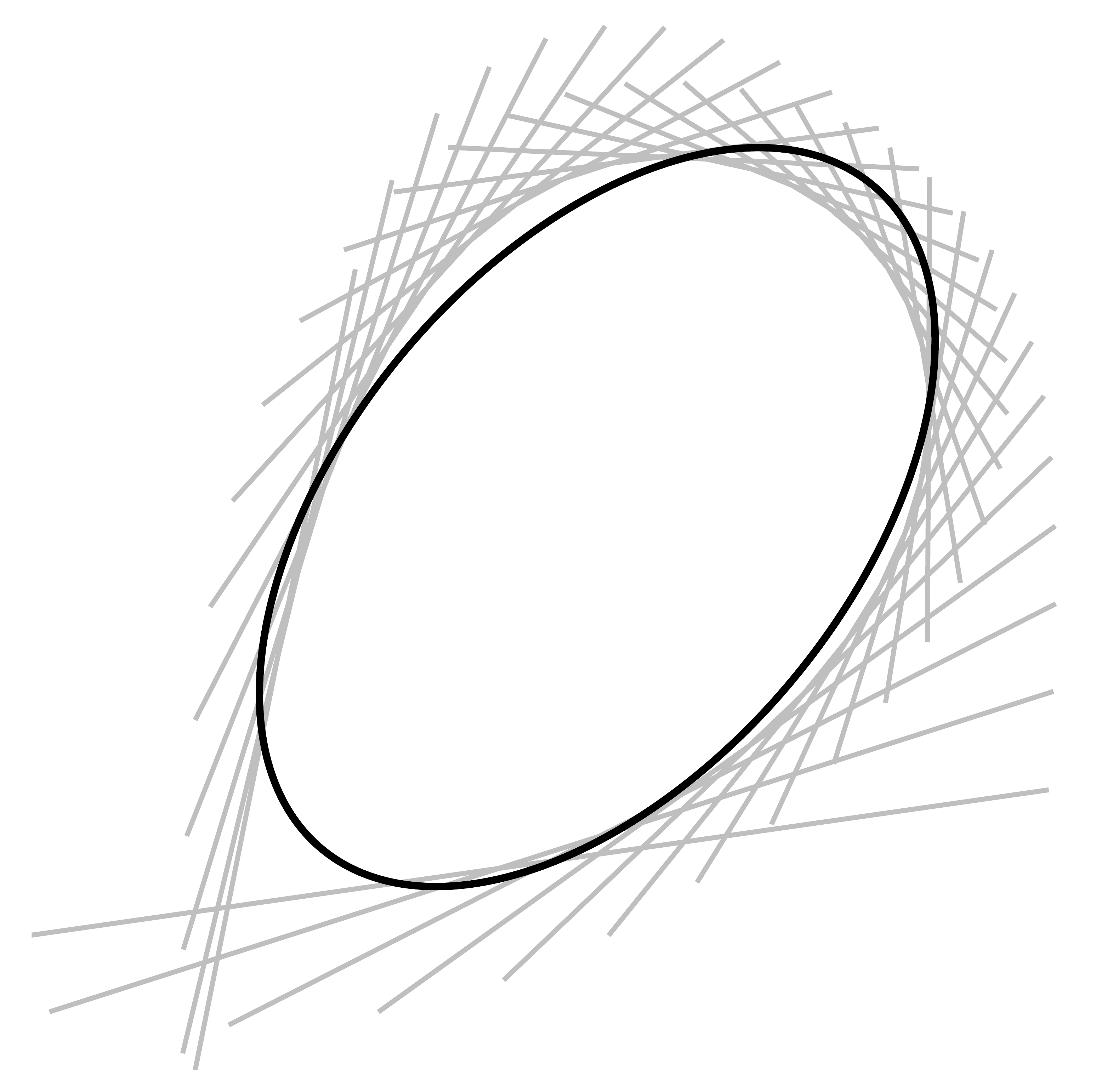}    
& \label{fig:app3}\includegraphics[width=0.27\textwidth]{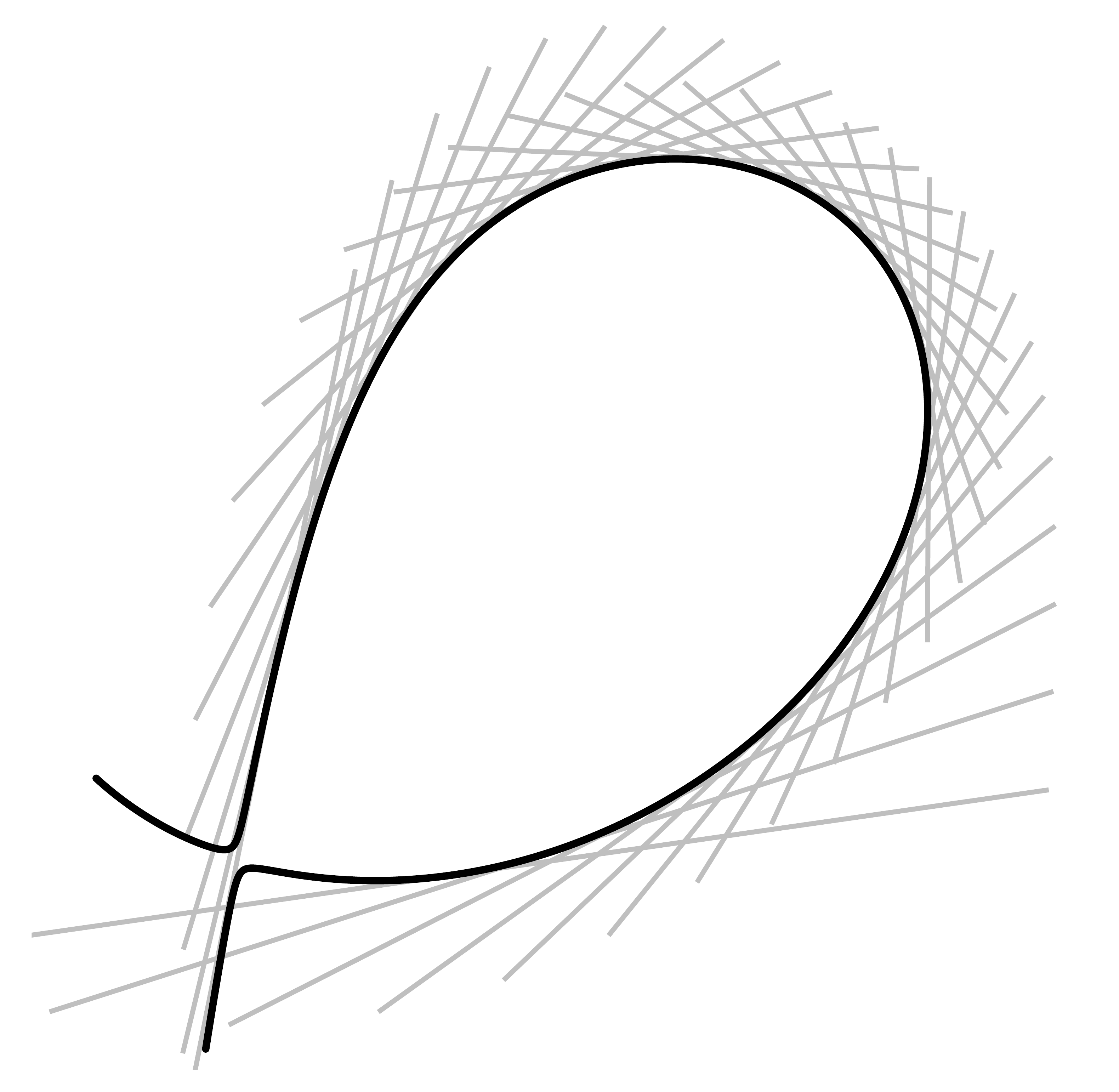}   \\
  $d=1$  & $d=2$ & $d=3$ \\
  \label{fig:app4}\includegraphics[width=0.27\textwidth]{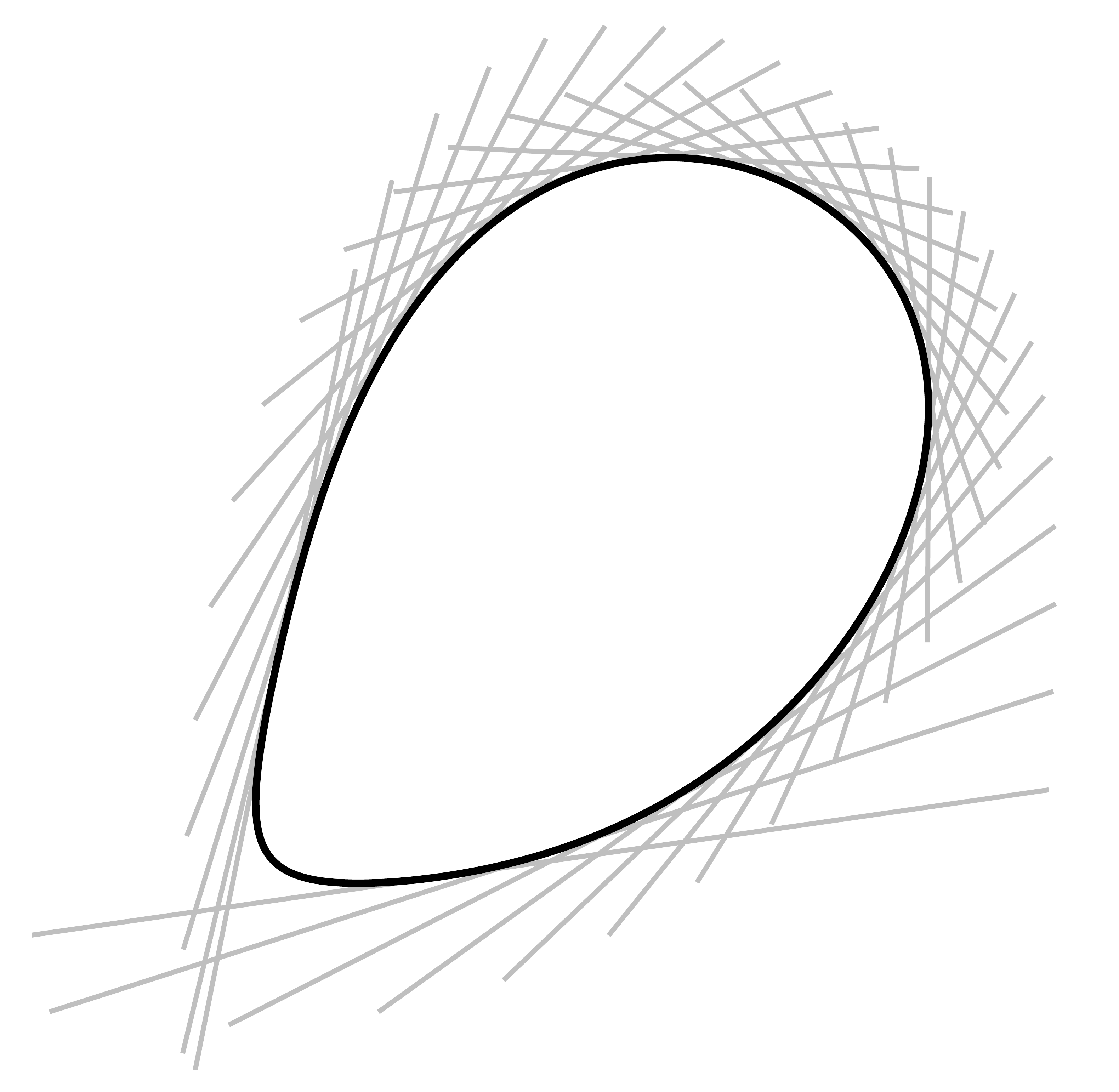}   
& \label{fig:app5}\includegraphics[width=0.27\textwidth]{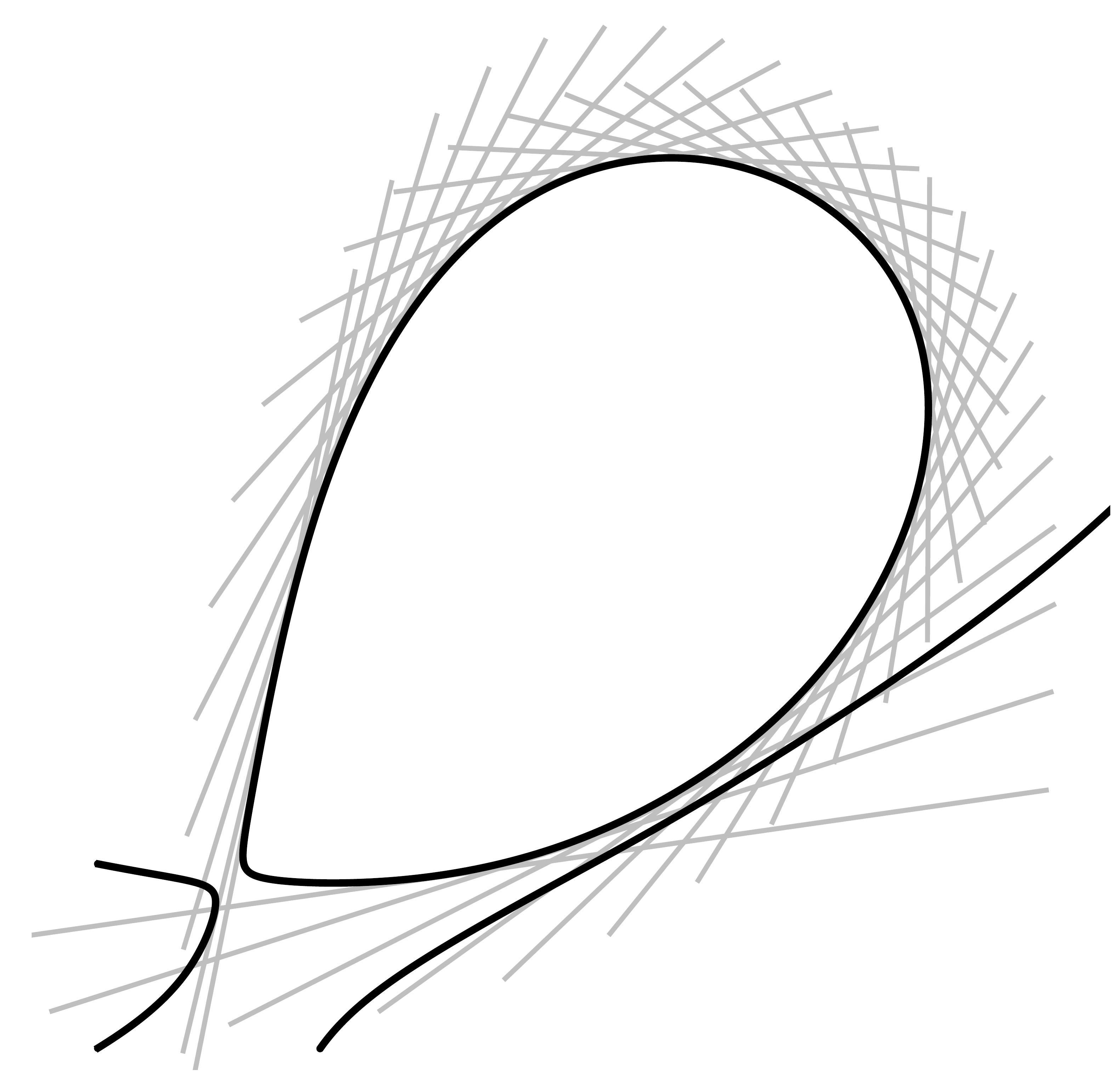}    
& \label{fig:app6}\includegraphics[width=0.27\textwidth]{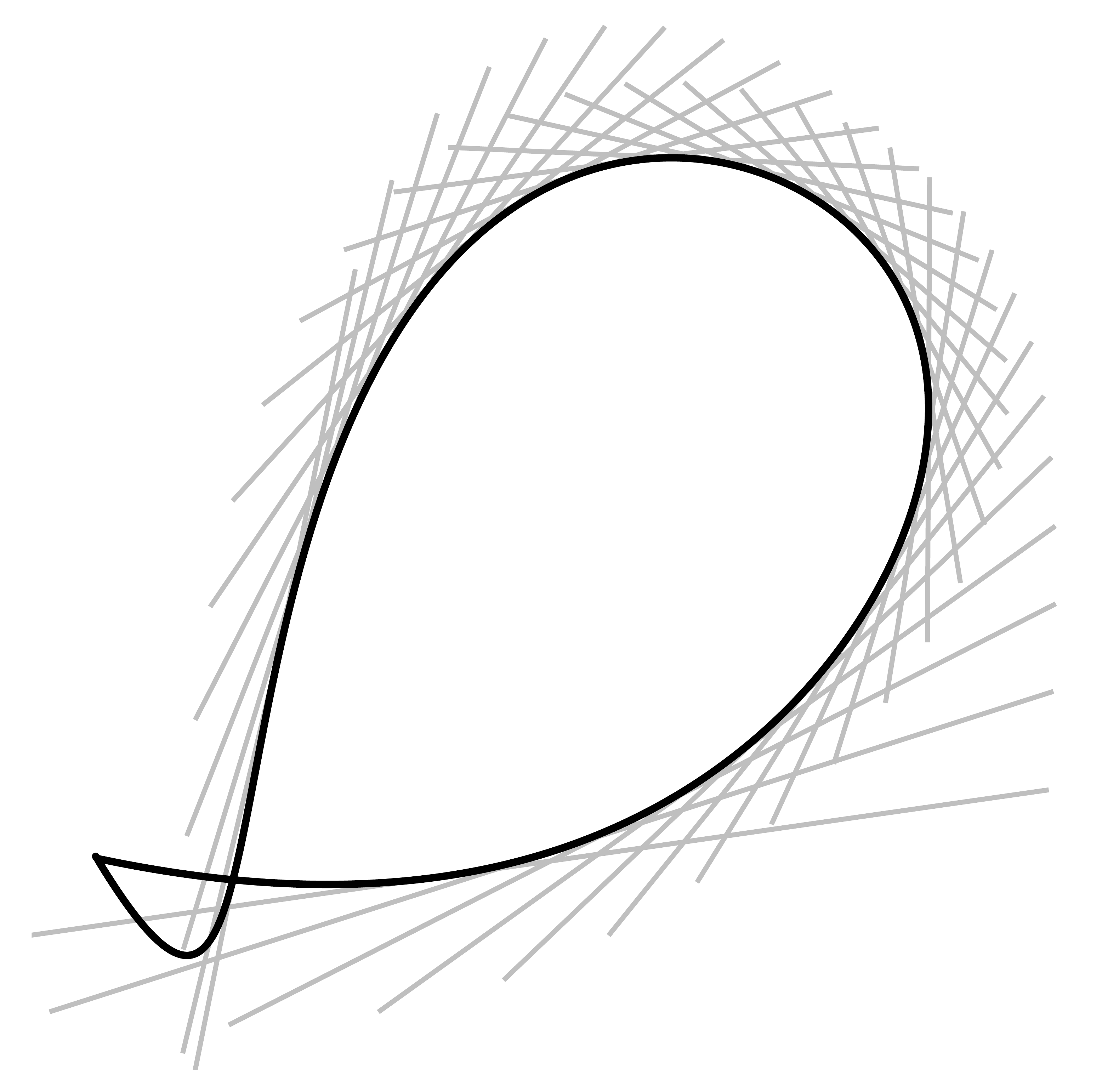}   \\
  $d=4$  & $d=5$ & $d=6$ \\
\end{tabular}
\end{center}
\end{figure}

In Table \ref{tab:times} we show the computation times for the above
approximations. The algorithm has been implemented in the Python 
programming language using the NumPy library for the built in FFT and 
singular value decomposition (SVD) algorithms. The results are computed on a
3.4Ghz Intel Core i7-2600 with 8GB RAM.

\begin{table} 
\begin{center}
 \caption{Computation times and number of matrix coefficients for the
examples in Fig. \ref{fig:example_lines}.}\label{tab:times}
 \begin{tabular}{|l||c|c|c|c|c|c|c|c|}
 \hline
 Degree $d$           & 1      & 2      & 3      & 4      & 5      & 6      \\
\hline 
 \# coefficients      & 196    & 975    & 2964   & 7000   & 14136  & 25641  \\
\hline
 Time (s)             & 0.02   & 0.04   & 0.11   & 0.23   & 0.45   & 0.80   \\
\hline 
 \end{tabular}
\end{center}
\end{table}
Instead of increasing the polynomial degree $d,$ one may also improve the
quality of the approximations by subdivision; the envelope is then approximated
by a piecewise implicit representation. It is thus of interest to see how the
approximation improves as the region $\Omega=I\times J$ is reduced. 

Consider a region $\Omega_i = I_i\times J_i,$ of diameter $2^{-i}$ centered on a
point $(s_0,t_0)$ in 
\[
\mathcal{H} = \{(s,t)\in \Omega:h(s,t)=0\}. 
\]
For an approximation $q_{d,i}$ of degree $d$ to over the region $\Omega_i,$ we
define the maximum algebraic error to be 
\[
\epsilon_{d,i} = \max_{(s,t)\in\mathcal{H}\cap\Omega_i}
|q_{d,i}(\mathbf{p}(s,t))|,
\]
where the coefficients $\mathbf{c}_{q_{d,i}}$ of $q_{d,i}$ have been
renormalized to ${\Vert\mathbf{c}_{q_{d,i}}\Vert=1,}$ in order to give meaningful
results. Given two approximations $q_{d,i},$ and $q_{d,i+1},$ on subsequent
subdivision regions $\Omega_i$ and $\Omega_{i+1},$ we define the convergence rate
to be $r_{d,i} = \log_2(\epsilon_{d,i}/\epsilon_{d,i+1}).$ Table \ref{tab:error}
shows
values of $\epsilon_{d,i}$ and $r_{d,i}$ for four successive subdivisions of the
example in Figure \ref{fig:example_lines} and degrees $d$ up to four. Values of
$\epsilon_{d,i}$ below machine precision have been omitted.

\begin{table}
\begin{center}
 \caption{Maximum algebraic error $\epsilon_{d,i},$ of the approximations of the
example in Fig.\ref{fig:example_lines}, together with approximate convergence
rates
$r_{d,i}.$}\label{tab:error}
\begin{tabular}{c c|c|c|c|c|c|c|c|c|}
\cline{3-10}
& & \multicolumn{8}{|c|}{Implicit Degree $d$} \\ \cline{3-10}
& & \multicolumn{2}{|c|}{1} & \multicolumn{2}{|c|}{2} & \multicolumn{2}{|c|}{3}
& \multicolumn{2}{|c|}{4} \\ \cline{3-10}
& & $\epsilon_{d,i}$ & $r_{d,i}$ & $\epsilon_{d,i}$ & $r_{d,i}$ &
$\epsilon_{d,i}$ & $r_{d,i}$ & $\epsilon_{d,i}$ & $r_i$  \\ \cline{1-10}
\multicolumn{1}{|c}{}{\multirow{5}{*}{ Diameter $2^{-i}$}} & 
\multicolumn{1}{|c|}{1}   
& 1.69e-1 & -     & 6.23e-3  & -     & 1.16e-4  & -     & 3.96e-6  & -      \\
\cline{2-10}
\multicolumn{1}{|c}{} &  \multicolumn{1}{|c|}{1/2} 
& 1.67e-1 & 0.02  & 3.46e-4  & 4.170 & 2.62e-6  & 5.467 & 2.66e-10 & 13.86  \\
\cline{2-10}
\multicolumn{1}{|c}{} &  \multicolumn{1}{|c|}{1/4} 
& 3.04e-2 & 2.458 & 1.52e-5  & 4.511 & 1.50e-9  & 10.77 & 1.34e-14 & 14.27  \\
\cline{2-10}
\multicolumn{1}{|c}{} &  \multicolumn{1}{|c|}{1/8} 
& 6.52e-3 & 2.223 & 5.02e-7  & 4.915 & 2.87e-12 & 9.028 & n/a      & n/a    \\
\cline{2-10}
\multicolumn{1}{|c}{} &  \multicolumn{1}{|c|}{1/16} 
& 1.41e-3 & 2.213 & 1.58e-8  & 4.989 & 5.63e-15 & 8.993 & n/a      & n/a    \\
\cline{2-10}\cline{1-10}
  \end{tabular}
\end{center}
\end{table}

As can be seen from Table \ref{tab:error}, the error $\epsilon_{d,i}$ decreases
both with increased degree and increased levels of subdivision. The values of
$r_{d,i},$ suggest that the convergence rates for $d=1,2,3$ and $4$ are
approximately two, five, nine and 14 respectively. This corresponds directly to
the number of degrees of freedom in approximating with lines, conics, cubics and
quartics and is hence as high a convergence as we can expect, supporting our choices 
for the degrees $(k_1,k_2).$ The results in Table \ref{tab:error} are typical of 
rational examples we have tested.

It should be noted that in general, envelope curves are not rational. Thus, this
example, whilst showing that high convergence rates are attainable, cannot
conclude that this is always the case. However, from studying additional 
examples, our experience shows that convergence behaviour is good in
the general setting.

\section{Conclusion}

We have presented a new implementation of approximate implicitization of
envelope curves using Chebyshev polynomials. We have detailed the computation
times and convergence behaviour of a specific example, thereby demonstrating the
feasibility of our approach. This paper also motivates theoretical work on
convergence rates as a direction for future research.
\\[3ex]{\bf Acknowledgements}
The research leading to these results has received funding from the
European Community's Seventh Framework Programme FP7/2007-2013
under grant agreement n$^\circ$ PITN-GA-2008-214584 (SAGA), and from
the Research Council of Norway (IS-TOPP). It was
also supported by the Doctoral Program ``Computational Mathematics''
(W1214) at the Johannes Kepler University of Linz.

\bibliographystyle{spmpsci}
\bibliography{references}

\end{document}